\newcommand{\ZZo}{\ZZ_{\geq 0}}
\newcommand{\Sinf}{\mathrm{S}_\infty}
\newcommand{\Sgn}{\mathrm{S}_{>n}}
\newcommand{\Sn}{\mathrm{S}_{n}}
\newcommand{\Syz}{\operatorname{Syz}}
\begin{document}

\title{Stillman's conjecture via generic initial ideals}

\author{Jan Draisma and Micha{\l} Laso{\'n} and Anton Leykin}

\address[Jan Draisma]{Mathematical Institute, University of
Bern,
Sidlerstrasse 5, 3012 Bern, Switzerland; and Eindhoven
University of Technology, The Netherlands}
\thanks{JD is partially supported by a Vici grant from the
Netherlands Organisation for Scientific Research (NWO)}
\email{jan.draisma@math.unibe.ch}

\address[Micha{\l} Laso{\'n}]{Mathematical Institute,
University of Bern, Sidlerstrasse 5, 3012 Bern,
Switzerland; Institute of Mathematics of the Polish Academy of Sciences, ul.\'{S}niadeckich 8, 00-656 Warszawa, Poland} 
\email{michalason@gmail.com}

\address[Anton Leykin]{School of Mathematics, 
Georgia Institute of Technology, Atlanta, GA 30332-0160,
USA}
\email{leykin@math.gatech.edu}
\thanks{Research of AL is supported in part by DMS-1719968 award from NSF}

\begin{abstract}
Using recent work by Erman-Sam-Snowden, we show that finitely generated
ideals in the ring of bounded-degree formal power series in
infinitely many variables have finitely
generated Gr\"obner bases relative to the graded reverse lexicographic
order. We then combine this result with the first author's work on
topological Noetherianity of polynomial functors to give an algorithmic
proof of the following statement: ideals in polynomial rings generated
by a fixed number of homogeneous polynomials of fixed degrees only
have a finite number of possible generic initial ideals, independently
of the number of variables that they involve and independently of the
characteristic of the ground field. 
Our algorithm outputs not only a finite list of possible generic initial ideals, 
but also finite descriptions of the corresponding strata in
the space of coefficients.
\end{abstract}

\maketitle

\section{Introduction}

\subsection*{Grevlex series and Gr\"obner bases}

Let $A$ be a ring and let $R_A$ be the $A$-algebra of formal power
series over $A$ of bounded degree in the infinitely many variables
$x_1,x_2,\ldots$. In other words, each element of $R_A$ is a formal
infinite sum
\[ \sum_{\alpha \in \NN^{\ZZo}, |\alpha| \leq d} c_\alpha x^\alpha \] 
where $d$ is some nonnegative integer and $c_\alpha \in A$ for each
sequence $\alpha=(\alpha_1,\alpha_2,\ldots)$ of nonnegative integers whose
sum $|\alpha|$ is (finite and) at most $d$. Addition and multiplication
are as usual. 

We equip the polynomial ring $R_A$ with the graded reverse lexicographic order
grevlex, in which $x^\alpha > x^\beta$ if either $|\alpha|>|\beta|$
or $|\alpha|=|\beta|$ and the last non-zero entry of $\alpha-\beta$
is negative. So, for instance, the monomials of degree $3$ are ordered
as follows:
\[
x_1^3>x_1^2x_2>x_1x_2^2>x_2^3>x_1^2x_3>x_1x_2x_3>x_2^2x_3>
x_1x_3^2>x_2x_3^2>x_3^3>x_1^2x_4>\ldots
\]
To remind the reader that this is the only monomial order considered
in this paper, we call the elements of $R_A$ {\em grevlex series} over
$A$. If $f$ is a nonzero element of $R$, then $\lmon(f)$ denotes the
largest monomial that has a nonzero coefficient in $f$,
$\lc(f)$ denotes that coefficient, and
$\lt(f)=\lc(f)\lmon(f)$ is the leading term. The ring $R_A$
carries a unique topology in which a basis of open neighborhoods of
$f \in R_A$ is given by all sets $\{g \in R_A \mid \lmon(f-g)
< x^\alpha\}$ as $\alpha$ varies.

Let $L$ be a field. A {\em Gr\"obner basis} of an ideal $I \subseteq
R_L$ is a subset $B \subseteq I$ such that for each $h \in L$ there
exists an $f \in B$ with $\lmon(f)|\lmon(h)$. We do not require that $B$
be finite. As in the classical setting, a Gr\"obner basis $B$ of $I$
generates $I$ as an ideal (Lemma~\ref{lm:Division}). Our first main
result is the following.

\begin{thm} \label{thm:GBExists}
For every field $L$, every finitely generated homogeneous ideal in
the ring $R_L$ has a finite Gr\"obner basis with respect to grevlex.
\end{thm}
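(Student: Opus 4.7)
The plan is to execute a Buchberger-style algorithm in $R_L$, with the termination argument drawing on the Erman--Sam--Snowden (ESS) structural theorem for bounded-degree series rings.

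First I would establish an analogue of the multivariate division algorithm in $R_L$. Given a finite set $G=\{g_1,\ldots,g_s\}\subseteq R_L\setminus\{0\}$ and an element $h\in R_L$, reduce $h$ by iteratively subtracting a monomial multiple of some $g_i$ to cancel the current leading term, and repeat. In the classical finite-variable case this terminates after finitely many steps because grevlex is a well-order on each degree; in $R_L$ it need not, since among degree-$d$ monomials one has descending chains such as $x_N^d>x_{N+1}^d>x_{N+2}^d>\ldots$. The elementary fact that saves the step is that between any two degree-$d$ grevlex monomials only finitely many other degree-$d$ monomials can lie, because they must involve variables of index bounded by the smaller monomial's largest index. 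Consequently the sequence of partial remainders is Cauchy in the grevlex topology on $R_L$ and converges to a well-defined remainder $r$, no term of which is divisible by any $\lmon(g_i)$. With division in hand, the classical derivation of Buchberger's criterion adapts verbatim: $G$ is a Gr\"obner basis of $(G)$ iff every $S$-polynomial $S(g_i,g_j)$ reduces to $0$ modulo $G$.

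Starting from the given finite generators $f_1,\ldots,f_r$ of $I$, I would then run Buchberger's algorithm, adjoining nonzero remainders of $S$-polynomial reductions to the current basis. The theorem reduces to the claim that this produces only finitely many new elements, equivalently that the monomial ideal generated by $\{\lmon(f):f\in I\setminus\{0\}\}$ in $R_L$ is finitely generated. A useful preliminary observation is that every $\lmon(f)$ involves only finitely many variables (since $f$ has finite degree), so the initial ideal lives inside the polynomial subring $L[x_1,x_2,\ldots]$ as a monomial ideal. To extract a finite generating set I would invoke ESS: their structure theorem places the $f_i$ inside a Noetherian polynomial subring $L[y_1,\ldots,y_c]\subseteq R_L$, making $I$ itself extended from a finitely generated ideal in a Noetherian ring.

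The hardest step is bridging the ESS structure with the grevlex order: the abstract generators $y_\lambda$ are not among the $x_i$, and the grevlex order does not restrict to any natural order on $L[y_1,\ldots,y_c]$. Overcoming this will likely require a careful passage through the finite-variable projections $L[x_1,\ldots,x_n]$ for growing $n$, combined with the Noetherianity granted by ESS, to show that the ascending chain of initial ideals generated by the successive Buchberger bases must stabilize after finitely many steps. Once stabilized, the current finite basis is a Gr\"obner basis for $I$.
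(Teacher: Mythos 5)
Your setup---the convergent division algorithm, the Buchberger criterion, and the reduction of the theorem to finite generation of the monomial ideal $\lmon(I)$---is sound and matches the paper's framework. But the step you yourself flag as the hardest is left as a hope rather than an argument, and the route you sketch does not work as stated. Knowing from Erman--Sam--Snowden that $f_1,\ldots,f_k$ lie in a polynomial subring $L[y_1,\ldots,y_c]$ gives finite generation of $I$, which you already had by hypothesis; it says nothing \emph{per se} about $\lmon(I)$, since (as you observe) grevlex is blind to the $y$'s. Nor can ``the Noetherianity granted by ESS'' be invoked to stabilize the chain of initial ideals produced by Buchberger: an ascending chain of monomial ideals of $L[x_1,x_2,\ldots]$ need not stabilize at all (e.g.\ $\langle x_1\rangle \subseteq \langle x_1,x_2\rangle \subseteq \cdots$), and that is exactly the point at issue.

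The missing bridge is specific to grevlex and passes through syzygies, not through the ideal $I$ itself. First, grevlex is compatible with the truncations $f\mapsto f^{(n)}$ killing $x_{n+1},x_{n+2},\ldots$: the image $(\lmon(f))^{(n)}$ is either zero or equal to $\lmon(f^{(n)})$. Second, the ESS structure theorem places the $f_i$ in $L[g_1,\ldots,g_l]$ with $g_1,\ldots,g_l$ part of a system of variables, hence a regular sequence in $R_L$; the ESS regular-sequence criterion then makes $g_1^{(n)},\ldots,g_l^{(n)}$ regular for $n\gg 0$, and from this one deduces that the restriction maps $\Syz_{R_L^{(n+1)}}(f_1^{(n+1)},\ldots,f_k^{(n+1)})\to\Syz_{R_L^{(n)}}(f_1^{(n)},\ldots,f_k^{(n)})$ are surjective for $n\gg 0$. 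Third, these combine: if $u\in\lmon(I)$ is divisible by $x_{n+1}$ and is written as the leading monomial of $\sum_i(a_i+x_{n+1}b_i)f_i^{(n+1)}$ with the $a_i$ free of $x_{n+1}$, then $(a_1,\ldots,a_k)$ is forced to be a syzygy of the truncations $f_i^{(n)}$; lifting it along the surjection shows $u/x_{n+1}\in\lmon(I)$. Hence for $n\gg 0$ every minimal generator of $\lmon(I)$ already lies in $L[x_1,\ldots,x_n]$, where Dickson's lemma finishes. Without this mechanism (or a concrete substitute), the proposal identifies the right target but does not yet prove the theorem.
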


The analogous statement certainly does not hold for all monomial
orders: in \cite[Appendix A.2]{Snellman98} it is shown that the
ideal generated by a generic quadric and a generic cubic has a
non-finitely generated initial ideal relative to the lexicographic
order.  Theorem~\ref{thm:GBExists} implies a positive answer to
\cite[Question 7.1]{Snellman98b}; in that paper a positive answer
is given in the case where the ideal is generated by series 
$\sum_{|\alpha|=d_i} c_{i,\alpha} x^{\alpha},\ i=1,\ldots,k$ whose
coefficients $(c_{i,\alpha})_{i \in [k],|\alpha|=d_i}$ are algebraically
independent over the prime field of $L$.

The natural question arises whether a Gr\"obner basis as in the
theorem can be computed in finite time. A straightforward variant {\sc
SeriesBuchberger} of Buchberger's algorithm shows that this would,
indeed, be the case---{\em if only we could work effectively with
infinite series}.

Next we focus on the following setting where we can indeed work with such
series. Let $\Sinf=\bigcup_n \Sn$ be the union of all symmetric groups,
and let $\Sgn$ be the subgroup of all permutations fixing $1,\ldots,n$
elementwise. Suppose that we are given an action of $\mathrm{S}_{>n_0}$
on $A$ by means of ring automorphisms, and let $\mathrm{S}_{>n_0}$
act on the variables $x_1,x_2,\ldots$ via $\pi x_i=x_{\pi(i)}$. This
action extends to an action of $\mathrm{S}_{>n_0}$ by (continuous)
ring automorphisms on $R_A$ via
\[ \pi \left(\sum_\alpha c_\alpha x^\alpha \right) = \pi \left(\sum_\alpha c_\alpha \prod_i x^{\alpha_i}_i \right)=\sum_{\alpha}
\pi(c_\alpha) \prod_i x^{\alpha_i}_{\pi(i)} = \sum_\alpha \pi(c_\alpha)
x^{\alpha \circ \pi^{-1}}. \]
We call an $f \in R_A$ {\em eventually invariant} if there exists an
$n \geq n_0$ such that $\pi(f)=f$ for all $\pi \in \Sgn$. To specify
an eventually invariant grevlex series we need only a finite number of
coefficients: if $f$ is invariant under $\Sgn$ and has degree $d$, then
$\Sgn$ has only finitely many orbits on monomials in $x_1,x_2,\ldots$
of degree at most $d$---the grevlex-largest element in each orbit
is of the form $x^\alpha$ where $\alpha(n+1) \geq \alpha(n+2) \geq
\ldots$. Then $f$ is uniquely determined by its coefficients on these
grevlex-largest representatives $x^{\alpha_1},\ldots,x^{\alpha_s}$.
We call $\hat{f}:=\sum_{i=1}^s c_{\alpha_i} x^{\alpha_i}$ the {\em
$n$-representation} of $f=\sum_\alpha c_\alpha x^{\alpha}$.
Often we will suppress $n$ from this notation. 

\begin{thm} \label{thm:GBComputable}
Suppose that $A=L$ is a field.  There exist a finite algorithm that on
input a finite list $\hat{f}_1,\ldots,\hat{f}_k$ of representations
of eventually invariant grevlex series $f_1,\ldots,f_k$ outputs a
finite list $\hat{g}_1,\ldots,\hat{g}_l$ representing an eventually
invariant Gr\"obner basis $g_1,\ldots,g_l$ of $\langle f_1,\ldots,f_k
\rangle_{R_L}$.
\end{thm}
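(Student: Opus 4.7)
The plan is to emulate Buchberger's algorithm on the finite $n$-representations, relying on Theorem~\ref{thm:GBExists} for termination of the outer loop. What needs to be checked is that every primitive operation---leading-term extraction, S-polynomial formation, and polynomial division---can be carried out effectively on representations while keeping the invariance level finite throughout.

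First I would set up representation arithmetic. An $\Sgn$-invariant grevlex series of degree at most $d$ corresponds bijectively to a finite list of coefficients on the grevlex-largest $\Sgn$-orbit representatives, i.e., monomials $x^\alpha$ with $|\alpha|\leq d$ and sorted tail $\alpha_{n+1}\geq\alpha_{n+2}\geq\cdots$. Raising a representation from level $n$ to a higher level $N$ is accomplished by splitting each $\Sgn$-orbit into its finitely many $\mathrm{S}_{>N}$-suborbits. Once both $\hat f$ and $\hat g$ are at a common level $N$, addition and subtraction become coordinate-wise, and multiplication of $\hat f$ by a monomial $x^\beta$ supported on $\{x_1,\ldots,x_M\}$ is computed by multiplying $x^\beta$ against each orbit representative of $\hat f$ and then refolding each result into its grevlex-largest representative at level $\max(N,M)$---all concrete, finite operations.

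Next, the S-polynomial. Because $\hat f$ lists the grevlex-largest representative of each orbit appearing in $f$, the leading monomial $\lmon(f)$ is the grevlex-largest monomial occurring in $\hat f$ and is thus directly readable. Setting $x^\gamma=\operatorname{lcm}(\lmon(f),\lmon(g))$ with largest variable index $M$, the formula $S(f,g)=(x^\gamma/\lt(f))f-(x^\gamma/\lt(g))g$ becomes a composition of the arithmetic operations above, and the result is invariant at level $\max(n,m,M)$.

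The main obstacle is the division step: to reduce an invariant $h$ modulo $B=\{f_1,\ldots,f_k\}$ using only finitely many representation-level operations, despite the infinite support of $h$. The idea is to perform \emph{equivariant} reductions that, at each step, subtract from $h$ an $\mathrm{S}_{>N}$-invariant element of $\langle f_i\rangle$ which cancels the leading orbit of $\hat h$. Care is required because the naive symmetrization of a single reduction, namely $\bigl(\sum_\pi x^{\pi(\beta-\alpha_i)}\bigr)f_i$ with $\alpha_i$ the exponent vector of $\lmon(f_i)$, can inadvertently raise the leading monomial: permuting $\beta-\alpha_i$ can shift the head of the product above $x^\beta$. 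I would handle this by choosing the subtracted element adaptively, summing only those translates $x^{\pi(\beta-\alpha_i)}f_i$ whose heads remain at or below $x^\beta$ and then cleaning up the remaining orbit representatives one by one in decreasing grevlex order. At each iteration the leading monomial of $\hat h$ strictly decreases within the finite, well-ordered set of sorted-tail monomials of bounded degree at level $N$, so reduction terminates in finitely many steps; termination of the outer Buchberger loop then follows from Theorem~\ref{thm:GBExists}. Correctness---that the equivariant reduction reproduces the output of \textsf{SeriesBuchberger} on the full series---follows from the $\mathrm{S}_{>N}$-equivariance of all operations together with the standard uniqueness of normal forms modulo the eventual Gr\"obner basis.
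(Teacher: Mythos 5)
Your overall strategy---emulate Buchberger's algorithm on finite $n$-representations and invoke Theorem~\ref{thm:GBExists} for termination of the outer loop---is exactly the paper's, and your treatment of representation arithmetic and of S-pairs is fine. The gap is in the division step, which is the real crux. You correctly observe that symmetrizing the quotient monomial, i.e.\ subtracting $\bigl(\sum_\pi x^{\pi(\beta-\alpha_i)}\bigr)f_i$, misbehaves when $\lmon(f_i)=x^{\alpha_i}$ is not itself $\Sgn$-invariant: in that case $x^{\pi(\beta-\alpha_i)}\lmon(f_i)\neq \pi(x^\beta)$, so the heads of the translates do not sweep out the orbit of $x^\beta$ (and indeed some terms of that orbit need not even be divisible by $\lmon(f_i)$). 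But your proposed repair---keeping only the ``good'' translates and then ``cleaning up the remaining orbit representatives one by one''---is not a finite procedure. The good translates form an arbitrary infinite subset of the coset space $\Sgn/\mathrm{Stab}$, so their sum need not be $\mathrm{S}_{>N}$-invariant for any finite $N$ and hence has no finite representation; and the leftover terms of the orbit of $x^\beta$, together with the junk the partial sum introduces, form infinite sets that are not organized into finitely many orbits of any fixed $\Sgn$, so ``one by one'' never terminates. Your termination argument (strict decrease among the finitely many sorted-tail monomials at level $N$) only applies once each reduction step acts on whole orbits and stays inside the finitely-represented world, which is precisely what your adaptive step fails to guarantee.

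The missing idea is a normalization that the paper performs before dividing: pass to the $m$-expansion for $m$ large enough that each $\lmon(f_i)$ involves only $x_1,\ldots,x_m$ and is therefore literally $\mathrm{S}_{>m}$-invariant. Once that holds, if the leading reducible term $cx^\beta$ of $\hat r$ is divisible by $\lmon(f_i)$, then so is every monomial in its $\mathrm{S}_{>m}$-orbit, the full orbit sum $a$ is divisible by $\lmon(f_i)$ as a series, and one subtracts $(a/\lmon(f_i))f_i$ in a single orbit-wise step; invariance is preserved exactly, no heads can rise above $x^\beta$ (each head is $\pi(x^\beta)\le x^\beta$ since $x^\beta$ is maximal in its orbit), and the representation-level loop terminates by your well-ordering argument. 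With this normalization inserted, the rest of your proof goes through and coincides with the paper's Lemma~\ref{lm:Division2} and {\sc SymmetricBuchberger}.
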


\subsection*{Stillman's conjecture}

The condition of eventual invariance seems rather restrictive, but it
is tailored to a proof of the following theorem.

\begin{thm} \label{thm:Stillman}
There exists a finite algorithm that on input $k \in \ZZ$ and
$d_1,\ldots,d_k \in \ZZo$ outputs a finite sequence $S_1,\ldots,S_t$,
each $S_i$ a finite set of monomials in the $x_j$, such that the
following holds: For every infinite field $K$, all $n \in \NN$, and
all homogeneous polynomials $f_1,\ldots,f_k \in K[x_1,\ldots,x_n]$
of degrees $d_1,\ldots,d_k$, respectively, the generic grevlex initial
ideal of $\langle f_1,\ldots,f_k \rangle_{K[x_1,\ldots,x_n]}$ equals
$\langle S_i \rangle_{K[x_1,\ldots,x_n]}$ for some $i$.
\end{thm}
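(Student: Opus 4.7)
The plan is to work with a ``universal'' version of the problem whose coefficients are $\Sinf$-orbits of indeterminates, to compute an eventually invariant grevlex Gr\"obner basis by the algorithm of Theorem~\ref{thm:GBComputable}, and to branch on the vanishing of leading coefficients in order to stratify the space of coefficients. Concretely, set $A_0:=\ZZ[c_{i,\alpha}\mid 1\le i\le k,\ \alpha\in\NN^{\ZZo},\ |\alpha|=d_i]$ with $\Sinf$ acting by permuting the $\alpha$'s, and consider the generic generators $F_i:=\sum_\alpha c_{i,\alpha}x^\alpha\in R_{A_0}$; these are invariant under the diagonal $\Sinf$-action and hence eventually invariant. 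Since a generic initial ideal is taken after a generic change of coordinates, I would further adjoin to $A_0$ the entries of an $\Sinf$-equivariant upper-triangular substitution $x_j\mapsto\sum_{\ell\le j}u_{j\ell}x_\ell$ with indeterminate $u_{j\ell}$, producing a larger equivariant ring $A$ and substituted generators $\tilde F_i\in R_A$.

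The algorithm then runs the variant of \textsc{SeriesBuchberger} underlying Theorem~\ref{thm:GBComputable} symbolically: at every reduction step that would require dividing by, or checking the nonvanishing of, a coefficient $p\in A$, I branch on $\{p\ne 0\}$ against $\{p=0\}$. On the open stratum I invert $p$ and continue as if working over the fraction field, exactly as in Theorem~\ref{thm:GBComputable}; on the closed stratum I impose $p=0$ and recurse. Each leaf of the resulting branching tree is a locally closed, $\Sinf$-stable subset $X_j\subseteq\mathrm{Spec}(A)$ labelled with an eventually invariant monomial ideal, and its finite $n$-representation provides the desired finite set $S_j$.

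The main obstacle is \emph{termination} of this recursion. The successive closed strata form a descending chain of $\mathrm{GL}_\infty$-stable Zariski-closed subsets of the affine space attached to the polynomial functor $V\mapsto\bigoplus_{i=1}^k\mathrm{Sym}^{d_i}V^\ast$ (together with a free factor for the change-of-coordinates entries). The first author's theorem on topological Noetherianity of polynomial functors guarantees that every such chain stabilizes, so the tree has only finitely many leaves and returns finitely many monomial ideals $S_1,\ldots,S_t$. Soundness is then routine: for any infinite field $K$, any $n$, and any tuple $(f_1,\ldots,f_k)\in K[x_1,\ldots,x_n]$ whose coefficient vector specializes the $c_{i,\alpha}$, a Zariski-open set of $\mathrm{GL}_n$-changes of coordinates produces the generic grevlex initial ideal, the specialization lies in a unique stratum $X_j$, and the corresponding $\langle S_j\rangle_{K[x_1,\ldots,x_n]}$ is that initial ideal.
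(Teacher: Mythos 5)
Your overall architecture matches the paper's (universal coefficients $c_{i,\alpha}$, a symbolic Buchberger computation that branches on the vanishing of leading coefficients, and topological Noetherianity of the polynomial functor to control the branching), but the termination argument has a genuine gap. Theorem~\ref{thm:Noetherianity} only bounds the number of times a \emph{strictly smaller} $\GL_\infty(K)$-stable closed subset can be produced, i.e.\ the number of steps along a branch at which a genuinely new vanishing condition is imposed. It says nothing about the length of the Buchberger computation \emph{within} a fixed stratum: along a branch where every leading coefficient encountered is either inverted or already forced to vanish by the conditions imposed so far, the closed stratum never shrinks, yet the algorithm keeps producing S-pair remainders and could a priori run forever. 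This is precisely where Theorem~\ref{thm:GBExists} is indispensable: the paper passes to a residue field $L$ of the stratum's coordinate ring and observes that the computation along such a branch coincides with {\sc SeriesBuchberger} over $L$, which terminates by Proposition~\ref{prop:SeriesBuchberger}. Your assertion that ``the chain of closed strata stabilizes, so the tree has only finitely many leaves'' conflates these two sources of non-termination and only addresses the first.

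Two further points need attention. First, for Theorem~\ref{thm:Noetherianity} to apply, the descending closed sets must be $\GL_\infty(K)$-stable, but the individual leading coefficients whose vanishing you impose are not $\GL$-invariant; the paper repairs this by replacing each imposed element $r$ by the finite set $E_m(r)$ of coefficients of $g^{-1}r$, whose span equals the span of the $\GL_m(K)$-orbit of $r$ (Lemma~\ref{lm:Orbit}). Your alternative of adjoining upper-triangular substitution entries $u_{j\ell}$ does not obviously yield a polynomial functor in the sense required (a triangular subspace of $V \otimes V^*$ is neither functorial nor covered by Theorem~\ref{thm:Noetherianity}), so the reduction to Noetherianity is not justified as stated; it also leaves unaddressed why, after specializing the $c_{i,\alpha}$ to a concrete tuple $f'_1,\ldots,f'_k$, a \emph{generic} choice of the $u_{j\ell}$ lands in the stratum whose label is the generic initial ideal (the paper handles this by working over $L=K((g_{hj}))$). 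Second, working over $\ZZ$, a leading coefficient may vanish in some characteristics and not others (it could be the integer $2$), while Draisma's theorem is a statement over a fixed infinite field; the paper therefore carries along a constructible set $Y \subseteq \Spec \ZZ$ and, on a putative infinite branch, first fixes a single prime $(p_0)$ in the intersection of all the $Y$'s before invoking Noetherianity in characteristic $p_0$. Your proposal is silent on how the branching interacts with the characteristic, which is the whole point of obtaining a characteristic-independent list $S_1,\ldots,S_t$.
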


In short: ideals in polynomial rings generated by homogeneous
polynomials of degrees $d_1,\ldots,d_k$ have only finitely many possible
generic grevlex initial ideals, independently of the number of variables.
Via \cite[Corollary 19.11]{Eisenbud95}, which is based on \cite{Bayer87},
this implies that the projective dimension of an ideal generated by
homogeneous forms of fixed degrees but in an arbitrary number of variables
and in arbitrary characteristic is uniformly bounded. This is Stillman's
conjecture from the title; see \cite{Peeva09}.

This is the fourth proof of Stillman's conjecture, after the
first proof by Ananyan-Hochster \cite{Hochster16} and two recent
proofs by Erman-Sam-Snowden \cite{Erman18}. Our proof is the same
in spirit as the second proof in the latter paper in that it uses
Draisma's theorem on topological Noetherianity of polynomial functors
\cite{Draisma17}. However, unlike the second proof in \cite{Erman18}
(but like the first proof there, and like Ananyan-Hochster's proof), our
theorem yields $S_1,\ldots,S_t$ that are valid in all characteristics.
Also, our theorem is constructive in the sense that we give an algorithm
for computing the possible initial ideals and the corresponding strata
given by equations and disequations for field characteristics and
coefficients of the input series.  All these are represented finitely.

In \cite{Erman18} the authors raise the question whether a version over
$\ZZ$ of Draisma's theorem holds, as this would also make their second
proof characteristic-inde\-pen\-dent. We do not settle this question. Instead,
the algorithm of Theorem~\ref{thm:Stillman} simulates a generic ideal
computation in all characteristics, branching along constructible
subsets of $\Spec \ZZ$ whenever necessary. We argue that, if there were
an infinite branch in this computation, then this branch would also
be infinite over some field; and that this would contradict Draisma's
theorem over that field.

In \cite{Erman17} (see also \cite[Theorem 1.9]{Derksen17}), using
Stillman's conjecture and Draisma's theorem, the same authors establish
a generalization of Stillman's conjecture to ideal invariants that
are upper semicontinuous in flat families and preserved under adding a
variable to the polynomial ring. We have not pursued the question to
what extent (an algorithmic version of) this generalisation also follows from our
Theorem~\ref{thm:Stillman}. 

\subsection*{Organization}

This paper is organized as follows. In Section~\ref{sec:GB} we
prove Theorem~\ref{thm:GBExists} using work from \cite{Erman18}. In
Section~\ref{sec:Buchberger} we use this existence result to prove
that a version of Buchberger's algorithm for eventually invariant
series terminates; this yields Theorem~\ref{thm:GBComputable}. In
Section~\ref{sec:PolFunctor} we review topological Noetherianity of a
specific polynomial functor, which follows from~\cite{Draisma17}. Finally,
in Section~\ref{sec:Stillman} we derive Theorem~\ref{thm:Stillman} from
Theorem~\ref{thm:GBComputable} and Draisma's theorem.

\section{The existence of finite Gr\"obner bases}
\label{sec:GB}

We will use two results from~\cite{Erman18}, the first of which is the following.

\begin{thm}[Theorem~1.2 from \cite{Erman18}] \label{thm:12}
If $L$ is perfect, then $R_L$ contains an (uncountable) set of homogeneous
elements $\{g_j:j \in J\}$ such that the unique $L$-algebra homomorphism
$L[(x_j)_{j \in J}] \to R_L$ sending $x_j$ to $g_j$ is an $L$-algebra
isomorphism.
\end{thm}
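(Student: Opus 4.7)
The natural approach is to construct the generators $\{g_j\}$ by induction on degree. Let $R=R_L$, let $R_d\subseteq R$ be its degree-$d$ homogeneous piece, and set $D_d:=\sum_{a+b=d,\ a,b\geq 1}R_a R_b\subseteq R_d$ (the \emph{decomposables}). Pick any $L$-vector space complement $V_d$ to $D_d$ inside $R_d$, let $G_d$ be an $L$-basis of $V_d$, and take $J:=\bigsqcup_{d\geq 1}G_d$ with $g_j:=j$. Grade $L[(y_j)_{j\in J}]$ by $\mathrm{wt}(y_j):=\deg g_j$; then the evaluation map $\varphi\colon L[(y_j)_{j\in J}]\to R$ sending $y_j\mapsto g_j$ is graded.

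Surjectivity of $\varphi$ is a straightforward induction on $d$: $R_d=\mathrm{span}_L(G_d)+D_d$, and the second summand lies in the $L$-subalgebra generated by $\bigcup_{d'<d}G_{d'}$ by the inductive hypothesis.

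For injectivity, suppose $F\in\ker\varphi$ is nonzero; after restricting to a graded component we may assume $F$ is weight-homogeneous of some weight $e$. Split $F=F_1+F_2$, where $F_2=\sum_{j:\mathrm{wt}(y_j)=e}a_jy_j$ collects the monomials that are a single top-weight variable, and $F_1$ collects the rest. Every monomial of $F_1$ is either a product of at least two distinct variables or a $\geq 2$nd power of a single variable of weight $<e$, so $\varphi(F_1)\in D_e$ while $\varphi(F_2)\in V_e$. The decomposition $R_e=V_e\oplus D_e$ forces $\varphi(F_1)=\varphi(F_2)=0$ separately, and linear independence of $G_e$ inside $V_e$ yields $F_2=0$.

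The main obstacle is to deduce $F_1=0$. Although every variable appearing in $F_1$ has weight strictly less than $e$, the polynomial $F_1$ itself is still weight-homogeneous of weight $e$, so the inductive hypothesis on smaller weights does not directly apply; what is really needed is the linear independence in $D_e$ of all products $g_{j_1}\cdots g_{j_r}$ with $r\geq 2$ and total weight $e$ involving generators of weight $<e$. I expect this to be the point at which the perfectness of $L$ enters, through a careful choice of the complements $V_d$ (for instance one compatible with Frobenius in positive characteristic, so that relations of the form $(f+h)^p=f^p+h^p$ do not produce unwanted linear dependencies among products of generators). Producing such a coherent functorial choice of $V_d$, and verifying the required linear independence, is the step I would have to work out most carefully, and is where I would expect the hypothesis that $L$ is perfect to become indispensable.
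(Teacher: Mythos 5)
This statement is quoted verbatim from \cite{Erman18} and is not proved in the paper at all, so there is no internal proof to compare against; I can only assess your sketch on its own terms. Your reduction is set up correctly: choosing complements $V_d$ to the decomposables $D_d$, surjectivity by induction on degree is fine, and the splitting $F=F_1+F_2$ with $\varphi(F_1)\in D_e$, $\varphi(F_2)\in V_e$ correctly isolates the real issue, namely that the generators of weight $<e$ admit no new algebraic relations in weight $e$. But that remaining issue \emph{is} the theorem; everything you have written so far is formal and would apply verbatim to any graded ring, including ones (e.g.\ $K[x,y]/(xy)$, suitably enlarged) that are manifestly not polynomial rings. So the proposal as it stands has a genuine gap, and you acknowledge as much.

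Moreover, the gap is not of the kind that can be closed by ``a careful choice of the complements $V_d$.'' The Erman--Sam--Snowden proof does not proceed by adjusting complements; it shows that \emph{any} choice works, and the input is the Ananyan--Hochster theory of strength: one proves that every homogeneous element of $R_L$ that is nonzero modulo $D_d$ has infinite strength, that a collection of homogeneous elements of infinite \emph{collective} strength is a regular sequence over which $R_L$ is free (this is the circle of ideas behind Theorem~5.4, which the present paper also quotes), and hence that such a collection is algebraically independent. Perfectness of $L$ enters inside that machinery (in controlling strength in characteristic $p$, where Frobenius-type phenomena genuinely break the statement for imperfect fields), not in the choice of $V_d$. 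Without importing some version of the strength/regular-sequence results, there is no known elementary route from your setup to the linear independence of the products $g_{j_1}\cdots g_{j_r}$, so the plan cannot be completed as described.
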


For each $n \in \ZZo$ we write $R_L^{(n)}:=L[x_1,\ldots,x_n]$. There is
a natural $L$-algebra homomorphism $R_L \to R_L^{(n)}, f \mapsto f^{(n)}$
that retains only the terms involving only the variables $x_1,\ldots,x_n$.
We may think of a degree-at-most-$d$ element of $R_L$ as a sequence
$(f^{(0)},f^{(1)},\ldots)$ in which each $f^{(n)}$ is a polynomial in
$R_L^{(n)}$ of degree at most $d$ such that $f^{(n)}$ is the image of
$f^{(n+1)}$ under discarding all terms divisible by $x_{n+1}$. Conversely,
$R_L^{(n)}$ is an $L$-subalgebra of $R_L$.  Observe that, for any $f
\in R_L$ and $n \in \ZZo$, the image $(\lmon(f))^{(n)}$ is either zero
or equal to $\lmon(f^{(n)})$ in the grevlex order on $L[x_1,\ldots,x_n]$.

\begin{thm}[Theorem 5.4 from \cite{Erman18}]\label{thm:54}
A sequence $g_1,\ldots,g_l \in R_L$ of homogeneous elements is a
regular sequence in $R_L$ if and only if $g_1^{(n)},\ldots,g_l^{(n)}$
is a regular sequence in $R_L^{(n)}$ for all $n \gg 0$.
\end{thm}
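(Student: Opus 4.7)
I would translate regularity into Tor-vanishing. Set $A := L[z_1,\ldots,z_l]$ graded by $\deg z_i := \deg g_i$, and view $R_L$ and $R_L^{(n)}$ as graded $A$-modules via $z_i \mapsto g_i$ and $z_i \mapsto g_i^{(n)}$ respectively. Since $A$ is a polynomial ring in $l$ variables, $L = A/(z_1,\ldots,z_l)$ admits the Koszul complex as a finite free resolution, and a homogeneous sequence is regular exactly when the Koszul complex tensored with the module is acyclic in positive degrees; equivalently, $\operatorname{Tor}^A_i(L, R_L) = 0$ for all $i \geq 1$, and similarly for $R_L^{(n)}$. The projection $f \mapsto f^{(n)}$ is a surjective graded $A$-module map, so these Tor groups sit in compatible position.

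\textbf{Degreewise comparison.} Fix a graded degree $d$. Then $(R_L^{(n)})_d = L[x_1,\ldots,x_n]_d$ is a finite-dimensional $L$-vector space, the projections $(R_L^{(n+1)})_d \twoheadrightarrow (R_L^{(n)})_d$ form a surjective (hence Mittag-Leffler) inverse system, and $(R_L)_d = \varprojlim_n (R_L^{(n)})_d$. Since $\varprojlim^1$ vanishes for such systems of $L$-vector spaces, cohomology of the Koszul complex commutes with this inverse limit in fixed graded degree, giving
\[
\operatorname{Tor}^A_i(L, R_L)_d \;=\; \varprojlim_n \operatorname{Tor}^A_i(L, R_L^{(n)})_d.
\]
Each term on the right is finite-dimensional, so the inverse system stabilizes, and its limit vanishes if and only if the terms vanish for all $n \gg 0$. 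This immediately yields the ``$\Leftarrow$'' direction: if all $\operatorname{Tor}^A_i(L, R_L^{(n)})$ vanish for $n \gg 0$, then so does every graded piece of $\operatorname{Tor}^A_i(L, R_L)$.

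\textbf{Uniformity and main obstacle.} For the ``$\Rightarrow$'' direction, only the finitely many homological degrees $i \in \{1,\ldots,l\}$ are relevant, but the graded degrees $d$ in which Koszul homology could in principle be nonzero for some $R_L^{(n)}$ are a priori unbounded, and the threshold $n_0(i,d)$ supplied by the degreewise argument may depend on $d$. The crux is a uniform degree bound ensuring that only finitely many pairs $(i,d)$ carry possible obstructions, so that a single $n_0$ suffices. This is the main obstacle, and I expect it to follow from the polynomial-ring structure of $R_L$ from Theorem~\ref{thm:12}: after writing $R_L = L[(y_j)_{j \in J}]$, the sequence $(g_\bullet)$ lies in a finitely generated polynomial subring $B = L[y_1,\ldots,y_m]$ over which $R_L$ is flat, and the Cohen-Macaulayness of $B$ controls the degrees in which Koszul homology can be supported. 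Transferring this control through the flat base change $B \to R_L$ and then through the projections $R_L \twoheadrightarrow R_L^{(n)}$ should yield the required uniform threshold. The non-perfect case reduces to the perfect one by faithfully flat base change to the perfect closure of $L$, which preserves both regularity and the degreewise comparison.
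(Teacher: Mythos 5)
The paper does not prove this statement: it is quoted verbatim as Theorem~5.4 of \cite{Erman18} and used as a black box, so there is no internal proof to compare against; I judge your proposal on its own terms. The translation into Koszul homology is legitimate even though $R_L$ is not Noetherian (for homogeneous elements of positive degree in an $\NN$-graded ring with degree-zero part a field, acyclicity of the Koszul complex in positive homological degrees is equivalent to regularity, by graded Nakayama applied to the long exact sequences, with no finiteness hypotheses), and the $\varprojlim$/$\varprojlim^1$ argument with surjective, hence Mittag-Leffler, systems correctly yields the ``$\Leftarrow$'' direction. That, however, is the easy direction; the one this paper actually needs (in the proof of Lemma~\ref{lm:Syzygies}) is ``$\Rightarrow$''.

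The gap in ``$\Rightarrow$'' occurs earlier than where you locate it. The sentence ``the inverse system stabilizes, and its limit vanishes if and only if the terms vanish for all $n \gg 0$'' is false for inverse systems of finite-dimensional vector spaces: the system with every term equal to $L$ and every transition map zero has zero limit and no vanishing term. Vanishing of $\varprojlim_n \operatorname{Tor}^A_i(L,R_L^{(n)})_d$ only says the stable images vanish; it produces no threshold $n_0(i,d)$ whatsoever unless you additionally prove the transition maps on Tor are eventually surjective, and surjectivity at the level of complexes does not descend to homology (the kernel of $C_{n+1}\to C_n$ is the Koszul complex with coefficients in $(x_{n+1})\cong R_L^{(n+1)}(-1)$, and the connecting homomorphisms are precisely the obstruction). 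So both the fixed-degree statement and the uniformity in $d$ are open in your write-up. Your proposed repair via a finitely generated polynomial subring $B=L[y_1,\ldots,y_m]\subseteq R_L$ over which $R_L$ is free does not close the gap either: the truncations $R_L^{(n)}$ are quotients of $R_L$, not $B$-flat, and the images of $y_1,\ldots,y_m$ in $R_L^{(n)}$ may acquire relations --- controlling exactly this is the hard content of the theorem. The actual proof in \cite{Erman18} runs through Ananyan--Hochster-type strength arguments: a regular sequence in $R_L$ extends to a system of variables as in Theorem~\ref{thm:12}, hence has infinite collective strength, and forms of sufficiently large collective strength in finitely many variables are automatically a regular sequence; this quantitative input is invisible to the limit formalism. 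Finally, your reduction to perfect $L$ needs justification on the $R_L$ side, since $R_{L'}$ is not $R_L\otimes_L L'$ (infinite products of copies of $L$ do not commute with $\otimes_L L'$).
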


The following lemma is straightforward from \cite[Section 5]{Erman18}, but we
include its proof using the two results above.

\begin{lm} \label{lm:Syzygies}
Let $f_1,\ldots,f_k \in R_L$. Then the natural map between
first syzygies 
\[ \Syz_{R_L^{(n+1)}}(f_1^{(n+1)},\ldots,f_k^{(n+1)}) \to 
\Syz_{R_L^{(n)}}(f_1^{(n)},\ldots,f_k^{(n)}) \]
is surjective for all $n \gg 0$.
\end{lm}

\begin{proof}
This surjectivity is not affected by enlarging the field, so we
may assume that $L$ is perfect. By Theorem~\ref{thm:12},
there exist homogeneous
$g_1,\ldots,g_l \in R_L$ such that $f_1,\ldots,f_k \in L[g_1,\ldots,g_l]$
and such that $g_1,\ldots,g_l$ are part of a system of variables
for the polynomial ring $R_L$. In particular, they are a regular
sequence in $R_L$, and hence by Theorem~\ref{thm:54} the polynomials
$g_1^{(n)},\ldots,g_l^{(n)}$ are a regular sequence in $R_L^{(n)}$
for $n \gg 0$. We draw two conclusions from this. First, for $n \gg
0$, $g_1^{(n)},\ldots,g_l^{(n)}$ are algebraically independent over
$L$, $f_1^{(n)},\ldots,f_k^{(n)}$ are elements of the polynomial
ring $A^{(n)}:=L[g_1^{(n)},\ldots,g_l^{(n)}]$, and 
\[ \Syz_{A^{(n+1)}}(f_1^{(n+1)},\ldots,f_k^{(n+1)}) 
\to 
\Syz_{A^{(n)}}(f_1^{(n)},\ldots,f_k^{(n)})  \]
is a bijection. Second, still for $n \gg 0$, $R^{(n)}$ is a
free module over $A^{(n)}$. Therefore,
$\Syz_{A^{(n)}}(f_1^{(n)},\ldots,f_k^{(n)}) \subseteq
(A^{(n)})^k$ generates
$\Syz_{R^{(n)}}(f_1^{(n)},\ldots,f_k^{(n)}) \subseteq
(R^{(n)})^k$ as an
$R^{(n)}$-module. Combining these two statements we find the
surjectivity claimed in the lemma.
\end{proof}

\begin{proof}[Proof of Theorem~\ref{thm:GBExists}]
Let $f_1,\ldots,f_k \in R_L$ be nonzero, homogeneous, and let
$n \in \ZZo$. Set $I:=\langle f_1,\ldots,f_k \rangle \subseteq
R_L$.  Consider a monomial $u \in \lmon(I) \cap R^{(n+1)}$ divisible by
$x_{n+1}$. There exist homogeneous $a_1,\ldots,a_k \in R^{(n)}$ with
$\deg(a_i)=\deg(u)-\deg(f_i)$ and homogeneous $b_1,\ldots,b_k \in
R^{(n+1)}$ with $\deg(b_i)=deg(u)-deg(f_i)-1$ such that
\[ u = \lmon((a_1+b_1 x_{n+1})f_1^{(n+1)} + \cdots + (a_k +
b_k x_{n+1})f_k^{(n+1)}). \]
Now $(a_1,\ldots,a_k)\in \Syz_{R^{(n)}}
(f_1^{(n)},\ldots,f_k^{(n)})$---otherwise, the right-hand side would equal
$\lmon(\sum_i a_i f_i^{(n)})$, which is not divisible by $x_{n+1}$. By
Lemma~\ref{lm:Syzygies}, if $n \gg 0$, the syzygy $(a_1,\ldots,a_k)$ can
be lifted to a syzygy $(c_1,\ldots,c_k) \in
\Syz_{R^{(n+1)}_L}(f^{(n+1)}_1,\ldots,f^{(n+1)}_k)$.  Write $c_i=a_i
+ x_{n+1} b'_i $ for each $i$.  Then
\[ u = \lmon((b_1-b'_1)x_{n+1}f_1^{(n+1)} + \cdots +
(b_k-b'_k)x_{n+1}f^{(n+1)}), \]
but then we see that $u/x_{n+1} \in \lmon(I)$. Hence for $n \gg 0$,
$\lmon(I)$ does not contain minimal generators divisible by $x_{n+1}$. It
follows that for such an $n$, $\lmon(I)$ is generated by any finite
generating list $m_1,\ldots,m_t$ of $\lmon(I^{(n)})$. Now $h_1,\ldots,h_t
\in I$ such that $\lmon(h_i)=m_i$ form a Gr\"obner basis of $I$.
\end{proof}

\section*{Buchberger's algorithm for grevlex series}

To turn Theorem~\ref{thm:GBExists} into an algorithm, we
derive a version of Buchberger's algorithm.

\begin{lm}[Division with remainder] \label{lm:Division}
Let $f_1,\ldots,f_k \in R_L$ be monic and $h \in R_L$. Then there
exist $q_1,\ldots,q_k \in R_L$ such that $\lmon(q_i f_i) \leq \lmon(h)$
for all $i$ and such that no term of the remainder $h-\sum_i q_i f_i$
is divisible by any $\lmon(f_i)$.
\end{lm}

In particular, if $f_1,\ldots,f_k$ is a Gr\"obner basis of the ideal
that they generate, then the remainder must be zero.

\begin{proof}
Initialize $r:=h$ and $q_i:=0$ for all $i$. While some term of $r$
is divisible by some $\lmon(f_i)$, pick the grevlex-largest such term
$c x^\alpha$ in $r$, subtract $c (x^{\alpha}/\lmon(f_i)) f_i$ from $r$
and add $c x^{\alpha}/\lmon(f_i)$ to $q_i$. This does not change terms in
$q_i$ larger than the term just added, and hence in the product topology
on $R_L^k$ the vector $q$ converges a solution vector $q$ as desired.
\end{proof}

\begin{algorithm}
\begin{algorithmic}
\Function{SeriesBuchberger}{$f_1,\ldots,f_k$}
  \State {\bf assume} $f_1,\ldots,f_k \in R_L$ homogeneous grevlex series.
  \State $m:=1$; $B:=\emptyset$ (the basis);
  $Q:=\{f_1,\ldots,f_k\}$ (the queue);
  \While{$Q \neq \emptyset$} 
    \While{$Q$ contains an $f$ with $f^{(m)} \neq 0$}
      \State $Q:=Q \setminus \{f\}$; 
      \State $f:=f/\lc(f)$; 
      \State $B:=B \cup \{f\}$;
      \For{$h \in B \setminus\{f\}$} 
        \State $\gamma:=\lcm(\lmon(h),\lmon(f))$;
        \State $s:=(\gamma/\lmon(h))h-(\gamma/\lmon(f))f$;
        \State compute a remainder $r$ of $s$ after division by $B$;
        \If{$r \neq 0$} 
          \State $Q:=Q \cup \{r\}$; 
        \EndIf;
      \EndFor;
    \EndWhile;
    \State $m:=m+1$;
  \EndWhile;
\State \Return $B$;
\EndFunction
\end{algorithmic}
\label{alg:SeriesBuchberger}
\end{algorithm}

\begin{prop} \label{prop:SeriesBuchberger}
Assuming an implementation for addition, multiplication, and
division with remainder of grevlex series, {\sc SeriesBuchberger} on
page~\pageref{alg:SeriesBuchberger} terminates after a finite number of
steps and outputs a Gr\"obner basis of the ideal generated by the series
in the input.
\end{prop}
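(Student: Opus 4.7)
The plan is to establish correctness and termination separately, in the style of classical Buchberger, with Theorem~\ref{thm:GBExists} standing in for Dickson's lemma at the outer level.

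\emph{Correctness} (assuming termination). When the algorithm halts, $Q$ is empty and by the for-loop structure every pair of members of the final $B$ has had its S-polynomial formed and reduced to zero modulo $B$. I would then check the standard Buchberger criterion in the grevlex-series setting---if every S-pair of elements of $B$ reduces to zero modulo $B$ then $B$ is a Gr\"obner basis of $\langle B\rangle$---via the usual argument, writing any $h\in\langle B\rangle$ as $\sum q_ih_i$ with $h_i\in B$, minimizing $\max_i\lmon(q_ih_i)$, and using S-pair rewriting to cancel leading-term overlaps, with division with remainder supplied by Lemma~\ref{lm:Division}.

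For termination I would argue by contradiction. A key preliminary observation, specific to grevlex and to the homogeneity that is preserved throughout the algorithm, is that for any nonzero homogeneous $f\in R_L$ the largest variable index appearing in any monomial of $f$ is at least the largest index appearing in $\lmon(f)$---otherwise that offending monomial would strictly exceed $\lmon(f)$ in grevlex. Equivalently, $f^{(m)}\neq 0$ if and only if $\lmon(f)\in R_L^{(m)}$. Hence at level $m$ the inner loop processes precisely those $f\in Q$ with $\lmon(f)\in R_L^{(m)}=L[x_1,\ldots,x_m]$, and the set $\lmon(B)\cap R_L^{(m)}$ is an ideal in the Noetherian ring $R_L^{(m)}$. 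Because every remainder placed in $Q$ has $\lmon$ outside the then-current $\lmon(B)$, the chain $\lmon(B)\cap R_L^{(m)}$ strictly ascends only finitely many times, and after this stabilization any later remainder with $\lmon\in R_L^{(m)}$ would enlarge the stable ideal---impossible---so every subsequent remainder has $\lmon\notin R_L^{(m)}$. The finitely many level-$m$ entries remaining in $Q$ are then absorbed in finitely many further steps, so the inner loop at level $m$ terminates and the outer loop increments $m$.

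Now suppose for contradiction that the algorithm runs forever. Since every nonzero $r\in R_L$ has $\lmon(r)\in R_L^{(m)}$ for some $m$, every element ever placed in $Q$ is eventually processed. Writing $B_\infty=\bigcup_tB_t$, each S-polynomial $s$ of a pair in $B_\infty$ admits a representation $s=\sum q_ih_i+r$ with $h_i,r\in B_\infty$ and every summand having $\lmon$ strictly below the relevant $\lcm$; by the standard Buchberger criterion, $B_\infty$ is then a Gr\"obner basis of $I=\langle f_1,\ldots,f_k\rangle$. Theorem~\ref{thm:GBExists} says $\lmon(I)$ is finitely generated, say by $v_1,\ldots,v_N$, and each $v_k$ is divisible by $\lmon(h_k)$ for some $h_k\in B_\infty$. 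At the finite time $t_0$ by which all of $h_1,\ldots,h_N$ are in $B$, $B_{t_0}$ is already a Gr\"obner basis; thereafter every S-pair reduces to zero modulo $B_{t_0}$, no further elements enter $Q$, and the remaining finite queue drains in finitely many further steps. This contradicts non-termination. The main obstacle is the termination of the inner loop, since the algorithm's reductions happen in $R_L$ rather than in any $R_L^{(m)}$; the grevlex-specific observation about largest indices is what reduces inner-loop termination to Noetherianity of $R_L^{(m)}$, while Theorem~\ref{thm:GBExists} closes off the outer loop after finitely many levels.
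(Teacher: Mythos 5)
Your proof is correct and follows essentially the same route as the paper's: inner-loop termination comes from Noetherianity of $L[x_1,\ldots,x_m]$ together with the grevlex-specific fact that truncation is compatible with leading monomials (the paper phrases this as the level-$\le n$ loops simulating ordinary Buchberger on the truncations $f_i^{(n)}$, while you run the ascending-chain argument directly), outer-loop termination comes from Theorem~\ref{thm:GBExists}, and correctness from the usual Buchberger criterion. Your write-up is a more detailed version of the paper's terse argument, with the same decomposition and the same key ingredients.
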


\begin{proof}
Fix any natural number $n$.  The loops with $m$ ranging from
$1$ to $n$ really compute a Gr\"obner basis for $I:=\langle
f_1^{(n)},\ldots,f_k^{(n)} \rangle$ while dragging the tails of the
series along. In particular, these $n$ loops terminate.  If an element
is added to the queue $Q$ in the $(n+1)$st run of the loop, then this
implies that $\lmon(I) \cap R_L^{(n)}$ does not generate $\lmon(I)$. By
Theorem~\ref{thm:GBExists}, this cannot happen infinitely often, so the
algorithm terminates. That the output is, indeed, a Gr\"obner basis,
follows from the ordinary Buchberger criterion.
\end{proof}

\section{Buchberger's algorithm for eventually invariant
series} \label{sec:Buchberger}

Recall that $\mathrm{S}_{>n_0}$ acts on $L$, on variables, and on
$R_L$. Given representations $\hat{f}_1,\ldots,\hat{f}_k$ of eventually
invariant $f_1,\ldots,f_k \in R_L$, we want to compute the representation
of an eventually invariant Gr\"obner basis of $I:=\langle f_1,\ldots,f_k
\rangle$.

The first ingredient in our variant of Buchberger's algorithm is an
analogue of Lemma~\ref{lm:Division}.

\begin{lm}[Division with remainder on representations.] \label{lm:Division2}
Let $f_1,\ldots,f_k \in R_L$ be monic and $h \in R_L$. Assume that
$h,f_1,\ldots,f_k,\lmon(f_1),\ldots,\lmon(f_k)$ are invariant under
$\Sgn$.  Then $q_1,\ldots,q_k$ and $r$ from Lemma~\ref{lm:Division}
can be chosen $\Sgn$-invariant, and the representations
$\hat{q}_1,\ldots,\hat{q}_k,\hat{r}$ can be effectively computed from
$\hat{h},\hat{f}_1,\ldots,\hat{f}_r$.
\end{lm}

\begin{proof}
Set $r:=h$. While some term of $r$ is divisible by some $\lmon(f_i)$,
pick the grevlex-largest such term $c x^\alpha$ in $r$, let
$x^{\alpha_1},x^{\alpha_2},\ldots$ be the (countably infinite) orbit of
$x^{\alpha}$ under $\Sgn$ and for each $i$ let $c_i$ be the coefficient of
$x^{\alpha_i}$ in $r$. Since $r$ is $\Sgn$-invariant, so is $a:=\sum_i
c_i x^{\alpha_i}$.  Moreover, as $\lmon(f_i)$ is $\Sgn$-invariant,
$a$ is divisible by $\lmon(f_i)$. Replace $r$ by $r-(a/\lmon(f_i))f_i$
and $q_i$ by $q_i+(a/\lmon(f_i))$; each of these are $\Sgn$-invariant.
This does not effect the terms of $r$ larger than $x^{\alpha}$ and gets
rid of this particular term. In this process, $r$ and the $q_i$ remain
$\Sgn$-invariant and converge to series as in Lemma~\ref{lm:Division}.

For effectiveness, we need to be able to compute the representation
of $r-(a/\lmon(f_i))f_i$ from $\hat{r},\hat{a}=c x^\alpha,$ and
$\hat{f}_i$. The representation depends linearly on the series, so
it suffices to have a procedure for computing the representation of
a product. The function {\sc Product} does just that---it uses that
no monomial of degree $e$ that is grevlex-maximal in its $\Sgn$-orbit
contains any of the variables $x_{n+e+1},x_{n+e+2},\ldots$.
\end{proof}

\begin{algorithm}
\begin{algorithmic}
\Function{Product}{$n,\hat{f},\hat{h}$}
  \State {\bf input:} $n$-representations $\hat{f},\hat{h}$ of
  $\Sgn$-invariant series $f,h$.
  \State {\bf output:} $n$-representation $\widehat{fh}$ of
  $fh$.
  \State $e:=\deg{f}+\deg{h}$;
  \State compute the truncations $f^{(n+e)},h^{(n+e)}$ from
  $\hat{f},\hat{h}$;
  \State $u:=f^{(n+e)} h^{(n+e)}$; 
  \State remove all terms in $u$ not grevlex-maximal in
  their $\Sgn$-orbit;
  \State \Return $u$;
\EndFunction
\end{algorithmic}
\end{algorithm}

\begin{algorithm}
\begin{algorithmic}
\Function{Remainder}{$n,\hat{h},\{\hat{f}_1,\ldots,\hat{f_k}\}$}
\State
{\bf input:} $n$-representations $\hat{h},\hat{f}_1,\ldots,\hat{f}_k$
of $\Sgn$-invariant series, with
$\hat{f}_i$ monic 
\State \quad and $\lmon(\hat{f}_i)$ $\Sgn$-invariant;
\State 
{\bf output:} the $n$-representation of a remainder of $h$
after division by $f_1,\ldots,f_k$.
\State 
{\bf assume} $\lmon(\hat{f}_1),\ldots,\lmon(\hat{f}_k)$ are
$\Sgn$-invariant.
\State $\hat{r}:=\hat{h}$;
\While{$\hat{r}$ contains a term $c x^\alpha$ divisible by
some $\lmon(\hat{f_i})$}
	\State
	$\hat{r}:=\hat{r}-${\sc
	Product}$(cx^\alpha/\lmon(\hat{f}_i),x \hat{f}_i)$;
\EndWhile;
\State \Return $\hat{r}$;
\EndFunction
\end{algorithmic}
\end{algorithm}

The next ingredient is $S$-series: if $f,g$ are monic
$\Sgn$-invariant series whose leading monomials are also
$\Sgn$-invariant, and $x^\gamma=\lcm(\lmon(f),\lmon(g))$, then we
set $S(f,g):=(x^\gamma/\lmon(f))f-(x^\gamma/\lmon(g))g$.  We note
that $S(f,g)$ is also $\Sgn$-invariant, and the $n$-representation of
$S(f,g)$ can be computed from the $n$-representations $\hat{f},\hat{g}$,
as follows.

\begin{algorithm}
\begin{algorithmic}
\Function{S}{$n,\hat{f},\hat{g}$}
	\State {\bf input:}  
	$n$-representations $\hat{f},\hat{g}$ of monic $\Sgn$-invariant 
series with $\lmon(\hat{f}),\lmon(\hat{g})$
	\State \quad $\Sgn$-invariant.
	\State {\bf output:} $n$-representation of $S(f,g)$.
	\State $x^\gamma:=\lcm(\lmon(\hat{f}),\lmon(\hat{g}))$;
	\State
	$\hat{s}:=(x^\gamma/\lmon(\hat{f}))\hat{f}-(x^\gamma/\lmon(\hat{g}))\hat{g}$;
	\State \Return $\hat{s}$;
\EndFunction
\end{algorithmic}
\end{algorithm}

From the $n$-representation $\hat{f}=\sum_{i=1}^s c_{\alpha_i}
x^{\alpha_i}$ of an $\Sgn$-invariant grevlex series one can
compute the $m$-representation $\tilde{f}$ with $m>n$ as follows. For each
$i=1,\ldots,s$, the group $\mathrm{S}_{>m}$ has only finitely many
orbits on $\Sgn x^{\alpha_i}$. Let
$x^{\beta_{i1}},\ldots,x^{\beta_{is_i}}$
be the grevlex-maximal representatives of these orbits, and let
$\pi_{i1},\ldots,\pi_{is_i} \in \mathrm{S}_{>m}$ be such that
$\pi_{ij} x^{\alpha_i}=x^{\beta_{ij}}$. Then define
\[
\tilde{f}:=\sum_{i=1}^s \sum_{j=1}^{s_i}
\pi_{ij}(c_{\alpha_i}) x^{\beta_{ij}}. 
\]
We call $\tilde{f}$ the $m$-{\em expansion} of $\hat{f}$. So we may
freely increase $n$ when desirable; we will use this to ensure that the
leading monomial of $f$ is $\Sgn$-invariant.

\begin{algorithm}
\begin{algorithmic}
\Function{SymmetricBuchberger}{$n,\hat{f}_1,\ldots,\hat{f}_k$}
\State {\bf input:} $n$-representations $\hat{f}_1,\ldots,\hat{f}_k$ 
$\Sgn$-invariant series.
\State {\bf output:} the $m$-representation of a Gr\"obner
basis of $\langle f_1,\ldots,f_k \rangle$ for some $m \geq n$.
  \State $m:=n$; $\hat{B}:=\emptyset$ (the basis);
  $Q:=\{\hat{f}_1,\ldots,\hat{f}_k\}$ (the queue);
  \While{$Q \neq \emptyset$} 
    \While{$Q$ contains an $\hat{f}$ with $\hat{f}^{(m)} \neq 0$}
      \State $Q:=Q \setminus \{\hat{f}\}$; 
      \State $\hat{f}:=\hat{f}/\lc(\hat{f})$; 
      \State $\hat{B}:=\hat{B} \cup \{\hat{f}\}$;
      \For{$\hat{h} \in \hat{B} \setminus
      \{\hat{f}\}$} 
      \State
      $\hat{r}:=${\sc Remainder}$(m,$S$(m,\hat{h},\hat{f}),\hat{B})$;
      \EndFor;
      \If{$\hat{r} \neq 0$} 
        \State $Q:=Q \cup \{\hat{r}\}$; 
      \EndIf;
    \EndWhile;
    \State Replace $\hat{B}$ and $\hat{Q}$ by their $(m+1)$-expansions;
    \State $m:=m+1$;
  \EndWhile;
\State \Return $\hat{B}$;
\EndFunction
\end{algorithmic}
\label{alg:SymmetricBuchberger}
\end{algorithm}

\begin{proof}[Proof of Theorem~\ref{thm:GBComputable}.]
The algorithm {\sc SymmetricBuchberger} above called with arguments
$(n,\hat{f_1},\ldots,\hat{f_k})$ performs the same operations as the
algorithm {\sc SeriesBuchberger} on page~\pageref{alg:SeriesBuchberger} on
input $(f_1,\ldots,f_k)$, except that it works with finite data structures
capturing the series $f_i$. Hence Proposition~\ref{prop:SeriesBuchberger}
implies both the termination and the fact that the output
of {\sc SymmetricBuchberger} is the representation of a Gr\"obner basis of
$\langle f_1,\ldots,f_k \rangle$.
\end{proof}

\begin{re}
Should one consider implementing {\sc SymmetricBuchberger}, it may be
practical to allow series to have $m$-representations with varying values
of $m$, as opposed to the uniform $m$ for every iteration of the outer
loop above.

In order to perform binary operations, i.e., additions and
multiplications, representations would then need to be expanded to a
matching value of $m$. Furthermore, to ensure termination, the order
of S-pairs needs to ensure that each leading monomial is eventually
encountered. In {\sc SymmetricBuchberger}, this is done by increasing
$m$ only after all the leading monomials in $x_1,\ldots,x_m$ have been
collected.
\end{re}

\section{A polynomial functor} \label{sec:PolFunctor}

Let $K$ be an infinite field. Let $\GL_n(K)$ act on the space $K^n$
with basis $x_1,\ldots,x_n$ by left multiplication, and for each $d \in
\ZZo$ on the $d$-th symmetric power $S^d K^n$ in the natural manner. Fix
$d_1,\ldots,d_k \in \ZZo$ and set
\[ P^{(n)}(K):= S^{d_1}K^n \oplus \cdots \oplus S^{d_k}K^n, \]
the space of tuples of forms of degrees $d_1,\ldots,d_k$ in $n$ variables. Now
define $P(K):=\lim_{\ot n} P^{(n)}(K)$, the projective limit along
the maps $P^{(n+1)}(K) \to P^{(n)}(K)$ coming from the projection
$K^{n+1} \to K^n$ forgetting the last coordinate. The map
$P^{(n+1)}(K) \to P^{(n)}(K)$ 
is $\GL_n(K)$-equivariant if we think of $\GL_n(K)$ as embedded in
$\GL_{n+1}(K)$ via the map $g \mapsto \diag(g,1)$, and hence $P(K)$ is
a module for the group $\GL_\infty(K):=\bigcup_n \GL_n(K)$.
The space $P(K)$
is the subspace of $(R_K)^k$ consisting of all tuples where the $i$-th
element is homogeneous of degree $d_i$ for each $i \in [k]$.

Dually, let $V:=\lim_{n \to } (P^{(n)}(K))^*$. Then $V$ is a
countable-dimensional space, $P(K)$ is canonically isomorphic to $V^*$,
and hence $K[P]:=SV$, the symmetric algebra on $V$, serves as a coordinate
ring of $P(K)$ in that the set of $K$-algebra homomorphisms $K[P] \to K$
is canonically identified with $P(K)$. We equip $P(K)$ with the Zariski
topology in which closed subsets are characterized by polynomial equations
from $K[P]$. Also $V$ and $K[P]$ are modules for $\GL_\infty(K)$. The
following is an instance of a general result on polynomial functors
from \cite{Draisma17}.

\begin{thm} \label{thm:Noetherianity}
Let $K$ be an infinite field, and fix integers $d_1,\ldots,d_N$. Then
any chain $P(K) \supseteq X_1 \supseteq \cdots$ of
$\GL_\infty(K)$-stable
Zariski-closed subsets stabilizes eventually. Equivalently, any sequence
$a_1,a_2,a_3,\ldots$ in $K[P]$ has the property that for $t \gg 0$ we have
\[ a_t \in \sqrt{\left\langle \bigcup_{i=1}^{t-1}
\GL_\infty(K) a_i
\right\rangle}. \]
\end{thm}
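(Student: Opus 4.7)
The plan is to recognize this statement as a direct instance of the main theorem of \cite{Draisma17}, and then to argue the equivalence between the topological and ideal-theoretic formulations. First, I would identify $P$ as a polynomial functor in the sense of \cite{Draisma17}: the assignment $K^n \mapsto S^{d_1}K^n \oplus \cdots \oplus S^{d_k}K^n$ is the direct sum of symmetric-power functors $S^{d_i}$, each of which is a basic polynomial functor, and any finite direct sum of polynomial functors is again polynomial. The compatibility maps $P^{(n+1)}(K) \to P^{(n)}(K)$ used to define the projective limit are precisely those induced by the functor applied to the split surjection $K^{n+1} \to K^n$, so $P(K)$ is the value of this polynomial functor on the countable-dimensional space, equipped with its $\GL_\infty(K)$-action.

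Draisma's theorem then yields that $P(K)$ is topologically Noetherian under $\GL_\infty(K)$, which is exactly the first assertion: every descending chain $P(K) \supseteq X_1 \supseteq X_2 \supseteq \cdots$ of $\GL_\infty(K)$-stable Zariski-closed subsets stabilizes.

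For the equivalence with the ideal-theoretic formulation, given a sequence $a_1, a_2, \ldots \in K[P]$, I would set
\[ I_t := \left\langle \bigcup_{i=1}^{t} \GL_\infty(K) a_i \right\rangle \subseteq K[P] \quad\text{and}\quad X_t := V(I_t) \subseteq P(K). \]
Each $I_t$ is $\GL_\infty(K)$-stable since the generating set is, and hence so is each $X_t$; clearly $X_1 \supseteq X_2 \supseteq \cdots$. By the topological statement, $X_{t-1} = X_t$ for all $t \gg 0$, which means that $a_t$ vanishes identically on $V(I_{t-1})$. An appropriate Nullstellensatz — available in this infinite-variable setting either by invoking the corresponding statement from \cite{Draisma17} or by reducing to the finite-dimensional slices $P^{(n)}(K)$ and using that $K$ is infinite — then yields $a_t \in \sqrt{I_{t-1}}$, as required.

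Conversely, a descending chain $X_1 \supseteq X_2 \supseteq \cdots$ could be refined by choosing, for each strict drop $X_t \supsetneq X_{t+1}$, an element $a_t \in K[P]$ that vanishes on $X_{t+1}$ but not on $X_t$; the ideal-theoretic conclusion then forces only finitely many strict drops. The main obstacle is therefore not hard analysis but a bookkeeping step: verifying that the symmetric-power construction really fits the polynomial-functor axioms of \cite{Draisma17} and that a Nullstellensatz of the required form holds for $\GL_\infty(K)$-stable closed subsets of $P(K)$. Both are essentially recorded in \cite{Draisma17}, so the proof reduces to invoking that paper.
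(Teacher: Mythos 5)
Your proposal matches the paper's own treatment: the paper likewise presents this theorem as a direct instance of Draisma's topological Noetherianity of polynomial functors (the functor $V \mapsto S^{d_1}V \oplus \cdots \oplus S^{d_k}V$) and, in Remark~\ref{re:Finite}, obtains the ideal-theoretic form from the chain condition via the Nullstellensatz after reducing to finitely many variables. The one imprecision is your Nullstellensatz step: over an infinite but non-algebraically-closed field, vanishing of $a_t$ on the $K$-points of $V(I_{t-1})$ does not yield $a_t \in \sqrt{I_{t-1}}$, so one must first apply the chain condition over an algebraic closure $\bar{K}$ and then descend radical membership from $\bar{K}[P]$ to $K[P]$ by faithful flatness --- which is precisely the role of the clause ``since the first sentence also holds for any algebraic closure of $K$'' in the paper's remark.
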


\begin{re} \label{re:Finite}
Two comments are in order. First, the implication $\Rightarrow$ between
the two statements in the theorem follows from the Nullstellensatz, since
the first sentence also holds for any algebraic closure 
of $K$. Second, each $a_i$ is an element of $K[P^{(n_i)}]$ for some
finite $n_i$. If $n \geq \max_{i\in [t]} n_i$, then the property of $a_t$
above is equivalent to
\[ a_t \in \sqrt{\left\langle \bigcup_{i=1}^{t-1} \GL_n(K)
a_i \right\rangle}, \]
where we have replaced $\infty$ by $n$. 
\end{re}

\section{Finitely many generic initial ideals}
\label{sec:Stillman}

We now prepare for the proof of Theorem~\ref{thm:Stillman}. For $i=1,\ldots,k$
let $f_i$ be the homogeneous degree-$d_i$ series
\[ f_i=\sum_{|\alpha|=d_i}c_{i,\alpha} x^\alpha \]
whose coefficients live in the polynomial ring
\[ A=\ZZ[c_{i,\alpha} \mid i \in [k], \alpha \in \ZZo^\NN, |\alpha|=d_i] \]
in which the $c_{i,\alpha}$ are variables. We note that if
$K$ is a field, then $K \otimes A$ is the coordinate ring
$K[P]$ of the space $P(K)$ introduced in
Section~\ref{sec:PolFunctor}.

On $A$ acts $\Sinf$ via ring automorphisms determined by $\pi
c_{i,\alpha}=c_{i,\alpha \circ \pi^{-1}}$, and each $f_i$ is
$\Sinf$-invariant. In the $0$-representation $\hat{f}_i$ of $f_i$, we have
\[ \hat{f}_i=\sum_{|\alpha|=d_i, \alpha(1)\geq \alpha(2) \geq \ldots}
c_{i,\alpha} x^\alpha, \]
a polynomial with as many terms as there are partitions of $d_i$.
Write $A^{(n)}$ for the subring of $A$ generated by those $c_{i,\alpha}$
such that $\forall m > n: \alpha(m)=0$.

Let $g$ be an $n \times n$-matrix of variables. Replacing, in $f_i$,
each $x_h$ with $h \leq n$ by $\sum_j g_{hj} x_j$ and each $x_h$ with
$h>n$ by $x_h$ yields a series $gf_i$ in the $x_h$ whose coefficients
are polynomials that are linear in the $c_{i,\alpha}$ and homogeneous
of degree $d_i$ in the $g_{hj}$. We use the formal notation $g^{-1}
c_{i,\alpha}$ for the coefficient of $x^\alpha$ in $gf_i$. This notation
is chosen so that if we specialize $g$ to be the matrix of a permutation
$\pi \in \Sn$, then $g^{-1} c_{i,\alpha}$ specializes to $\pi^{-1}
c_{i,\alpha}$ in the $\Sn$-action above.  For a polynomial
$r=r(c) \in A$
(in the $c_{i,\alpha}$ with varying $i$ and $\alpha$) write $g^{-1}
r \in \ZZ[g_{hj} \mid h,j \in [n]] \otimes_\ZZ A$ for the polynomial obtained by replacing each $c_{i,\alpha}$
with $g^{-1} c_{i,\alpha}$. Regarding $g^{-1} r$ as a polynomial in
the entries $g_{hj}$ whose coefficients are in $A$, we write $E_n(r)
\subseteq A$ for the set of all nonzero coefficients. It is easy to see
that if $r \in A^{(n)}$, then also $E_n(r) \subseteq A^{(n)}$.
The following easy lemma explains the significance of this construction.

\begin{lm} \label{lm:Orbit}
If $K$ is an infinite field, then the $K$-span of the orbit of $1 \otimes r
\in K \otimes_\ZZ A$ under $\GL_n(K)$ equals the $K$-span of $1 \otimes
E_n(r(c))$. \hfill $\square$
\end{lm}

\begin{algorithm}
\begin{algorithmic}
\Procedure{Stillman}{$n,\hat{B},\hat{Q},Y,Z,N$}
  \State $m:=n$;
  \State $Q:=Q \setminus \{0\}$;
  \While{$Y\neq\emptyset$ and $\hat{Q} \neq \emptyset$} 
    \While{$Y\neq\emptyset$ and $\hat{Q}$ contains an $\hat{f}$ with $\hat{f}^{(m)} \neq 0$}
      \State $\hat{Q}:=\hat{Q} \setminus \{\hat{f}\}$;
      \State $b:=\lc(\hat{f})\in A[N^{-1}]$;
      \State $a:=$numerator$(b) \in A$;
      \State $Y_1:=\left\{(p) \in Y: 
      	a \in \sqrt{\langle \bigcup_{r \in Z} E_m(r)
          \rangle_{\FF_p  \otimes A^{(m)}[N^{-1}]}}\right\}$;
      \State {\sc Stillman}($m,\hat{B},\hat{Q}\cup
		\{\hat{f}-\lt(\hat{f})\},Y_1,Z,N$); \hfill
		(I)
      \State $Y:=Y \setminus Y_1$;
        \State $Y_2:=\left\{(p) \in Y:
          1 \not \in \langle \bigcup_{r \in Z \cup \{a\}} E_m(r)
          \rangle_{\FF_p \otimes A^{(m)}[N^{-1}]}\right\}$;
        \State {\sc Stillman}($m,\hat{B},\hat{Q}\cup\{\hat{f}-\lt(\hat{f})\},Y_2,Z\cup\{a\},N$);
        \hfill (II)
        \State $\hat{f}:=\hat{f}/b$; 
        \State $N:=N \cup \{a\}$;
        \State $\hat{B}:=\hat{B} \cup \{\hat{f}\}$;
        \For{$\hat{h} \in \hat{B} \setminus\{\hat{f}\}$} 
          \State $\hat{r}:=${\sc Remainder}$(m,$S$(m,\hat{h},\hat{f}),B)$;
          \If{$\hat{r} \neq 0$} 
            \State $\hat{Q}:=\hat{Q} \cup \{\hat{r}\}$; 
          \EndIf;
        \EndFor;
    \EndWhile;
    \State Replace $B$ and $\hat{Q}$ by their $(m+1)$-expansions;
    \State $m:=m+1$;
  \EndWhile;
  \If{$Y\neq\emptyset$}\State {\bf print} $\lmon(\hat{B})$; \EndIf;
  %\State {\bf print} $(Y,Z,N)$;
\EndProcedure
\end{algorithmic}
\label{alg:Stillman}
\end{algorithm}

\begin{proof}[Proof of Theorem~\ref{thm:Stillman}]
In the recursive variant {\sc Stillman} of {\sc SymmetricBuchberger} on
page~\pageref{alg:Stillman}, we write $\FF_p$, where $p$ is either zero or
a prime, for the prime field of characteristic $p$.  
%The command {\bf exit procedure} means an exit back to the run of the procedure from which the current run was called, not a termination of the computation altogether.
We prove that {\sc Stillman} terminates on input
\[ (0,\emptyset,\{\hat{f}_1,\ldots,\hat{f}_d\},\Spec(\ZZ),\emptyset,\emptyset)
\]
and that it prints out the sets $S_i$ as in the theorem.

First we clarify the role of the variables. The symbols
$m,\hat{B},\hat{Q}$ carry the same meaning as in {\sc
SymmetricBuchberger}.  The meaning of $Z$ and $N$, finite subsets of
$A$, is that of vanishing and nonvanishing elements, respectively, at
the current run of the algorithm.  While $Z$ stays constant throughout
the run ($Z$ is extended only when recursive calls are made), $N$ is
augmented as it accumulates elements due to presumed nonvanishing of
the leading coefficients.

The current run considers only primes in the set
$Y \subseteq \Spec(\ZZ)$. Furthermore, it deals with the specializations
of the truncations $f_1^{(m)},\ldots,f_k^{(m)}$ with coefficients in
\[ \bar{A}^{(m)}:=A^{(m)}[N^{-1}]/\sqrt{\left\langle\bigcup_{r
\in Z} E_m(r)\right\rangle}. \]
We discuss the computations of $Y_1,Y_2$. 

For $Y_1$, one starts
running the ordinary Buchberger algorithm on the ideal in the localization
$A^{(m)}[N^{-1}][t]$ generated by $\bigcup_{r \in Z} E_m(r)$ and $ta-1$
(Rabinowitsch' trick), where $t$ is an auxiliary variable.  Whenever an
integer leading coefficient is divisible by a nonzero prime $(p)$ in $Y$,
the algorithm branches into a branch where multiples of $p$ are zero and a
branch where $p$ is invertible. Assuming that $Y$ is constructible to begin
with, each leaf of this finite tree yields a constructible set of primes
leading to that leaf, and $Y_1$ is the union of the primes corresponding
to leaves where the aforementioned ideal contains $1$.  

A similar algorithm
is used to compute $Y_2$. Since we start with $Y=\Spec \ZZ$, it follows
that in any of the further calls of {\sc Stillman} the set $Y$
is constructible. In other words, $Y$ is either a finite set of nonzero
primes in $\Spec(\ZZ)$ or a cofinite set in $\Spec(\ZZ)$ containing $(0)$.

Furthermore, in each run of {\sc Stillman}, for each $(p) \in Y$,
the algebra $\FF_p \otimes \bar{A}^{(m)}$ has $0 \neq 1$. This is true
at the initial call, it remains true in call (I) since $m,Z,N$ do not
change, and it remains true in call (II) since we explicitly test for
this condition. Furthermore, it remains true later in the loop, since
there we have already removed from $Y$ the primes in $Y_1$, which are
those where inverting $a$ would cause the algebra to collapse.

Let $T$ be the rooted tree whose vertices are the runs of {\sc Stillman}
and whose edges are labelled (I) or (II) according to which call in the
algorithm leads from one run to the other. We claim that every path in
$T$ away from the root is finite. Indeed, consider an infinite
path $\gamma$ in $T$. The argument $Y$ remains nonempty and weakly
decreases along $\gamma$ and since it is locally closed in $\Spec(\ZZ)$,
so there exists a prime $(p_0) \in \Spec \ZZ$ that is in the intersection
of all the arguments $Y$ along $\gamma$.

If infinitely many edges in $\gamma$ are labelled (II), then 
$Z$ records $a_1,a_2,a_3,\ldots$ with $a_i \in A^{(m_i)}$ and $m_1 \leq
m_2 \leq \ldots$ and
\[ 
	a_i \not \in \sqrt{\left\langle E_{n_i}(a_1) \cup \cdots
	\cup E_{n_i}(a_{i-1}) \right\rangle_{\FF_{p_0} \otimes A^{(m_i)}}
	}
	\text{ for all } i=1,2,3,\ldots  
\]
By Lemma~\ref{lm:Orbit} and Remark~\ref{re:Finite} this contradicts
the Noetherianity of $K[P]$ over any infinite field $K$ of
characteristic $p_0$ (Theorem~\ref{thm:Noetherianity}).

Hence only finitely many edges in $\gamma$ are labelled (II). We analyse
the computation along $\gamma$ beyond the last edge $e$ labelled (II). Let
$m_\infty \in \NN \cup \{\infty\}$ be the supremum of the values of $m$
along $\gamma$, let $Z_0$ be the (fixed) value of $Z$ along $\gamma$
from $e$ onwards, and let $N_\infty$ the union of all $N$'s seen along
$\gamma$. Define
\[ \tilde{A}:=\FF_{p_0} \otimes
A[N_\infty^{-1}]/\sqrt{\left\langle \bigcup_{m \leq m_\infty, r \in Z}
E_m(r) \right\rangle}. \]
By construction, $1 \neq 0$ in $\tilde{A}$, hence there exists an
epimorphism from $\tilde{A}$ to some field $L$ of characteristic $p_0$.
Then the call of {\sc SeriesBuchberger} with input the images of the
$f_i$ in $R_L$ performs the same operations as the algorithm {\sc
Stillman} along $\gamma$. Since the former algorithm terminates by
Proposition~\ref{prop:SeriesBuchberger}, so does the latter.

We conclude that $T$ is finite. Let $K$ be an infinite field, $n$ a
natural number, and let $f'_1,\ldots,f'_k$ be homogeneous polynomials
in $K[x_1,\ldots,x_n]$ of degrees $d_1,\ldots,d_k$, respectively.
We claim that, at the leaf of some path $\gamma$ in $T$ away from
the root, generators for the generic initial ideal of $\langle
f'_1,\ldots,f'_k \rangle$ are printed. To see this, let $g$
be an $n \times n$-matrix of variables, set $L:=K((g_{hj})_{h,j})$,
and consider the ideal $J$ in $L[x_1,\ldots,x_n]$ generated by the
polynomials $gf'_1,\ldots,gf'_k$ obtained by replacing $x_h$ in each
$f'_i$ by $\sum_h g_{hj} x_j$. Then the generic initial ideal of $\langle
f'_1,\ldots,f'_k \rangle \subseteq K[x_1,\ldots,x_n]$ equals the initial
ideal of $J$, and the latter is computed by {\sc
Buchberger} (or {\sc SymmetricBuchberger}) on
input ($n$ and) $gf'_1,\ldots,gf'_k$. To find $\gamma$, proceed as follows:
whenever $a \in A$ is defined as the numerator of a leading
coefficient of $\hat{f}$, check if under the specialization
$f_i \mapsto gf'_i$ the element $a$ specializes in $K$ to zero
or to a nonzero element. If $a$ specializes to zero, then follow call
(I) or call (II) according as $(\cha K) \in Y_1$ or not. If $a$ does
not specialize to zero, then follow neither of these calls and continue
with the loop.  Along this $\gamma$, {\sc Stillman} performs the same
operations as {\sc SeriesBuchberger}, and hence terminates with the
generic initial ideal of $\langle f'_1,\ldots,f'_k \rangle$.
\end{proof}

\begin{re}
Apart from printing $\lmon(\hat{B})$ at each leaf of $T$ we may also
print $Y,Z,N$, which together describe a locally closed stratum 
of $P(K)$, for any infinite $K$ with $(\cha K) \in Y$, consisting of
$k$-tuples with generic initial ideal generated by $\lmon(\hat{B})$.
\end{re}

\bigskip

{\bf Acknowledgements.}
The authors thank Institut Mittag-Leffler for the hospitality during their stay in the early 2018. 

The first author thanks Diane Maclagan, who introduced
Stillman's conjecture to him many years ago and suggested
that it might be related to Noetherianity up to symmetry.

The third author thanks MSRI for the Fall of 2017 stay, during which ``The Fellowship of the Ring'' and Craig Huneke rekindled his interest in Stillman's conjecture. Last but the most, the thanks go to Gennady Lyubeznik and Mike Stillman. When they both were 40 years young, Gennady suggested a problem that involved a certain constructible stratification and Mike helped solving it via parametric Gr\"obner bases. The intuition gained 20 years ago was instrumental in the present work.

%\bibliographystyle{alpha}
%\bibliography{diffeq,draismapreprint}

\end{document}